\theoremstyle{plain}
\newtheorem{thm}{Theorem}[section]
\newtheorem{lem}[thm]{Lemma} 
\newtheorem{prop}[thm]{Proposition}
\theoremstyle{definition}
\theoremstyle{remark}
\newtheorem{rem}[thm]{Remark}
\numberwithin{equation}{section}
\newtheorem*{convention*}{Convention}
\newcommand{\lgw}{\longrightarrow}
\newcommand{\lgm}{\longmapsto}
\newcommand{\si}{\sigma}
\newcommand{\R}{\mathbb{R}}
\newcommand{\K}{\mathbb{K}}
\newcommand{\N}{\mathbb{N}}
\newcommand{\C}{\mathbb{C}}
\newcommand{\Q}{\mathbb{Q}}
\renewcommand{\t}{\tau}
\renewcommand{\lg}{\langle}
\newcommand{\rg}{\rangle}
\renewcommand{\phi}{\varphi}
\renewcommand{\d}{\delta}
\newcommand{\e}{\varepsilon}
\let\mathscr\mathcal
\DeclareMathOperator{\re}{Re}
\DeclareMathOperator{\im}{Im}
\DeclareMathOperator{\mult}{mult }
\newcommand{\ps}{\psi}
\newcommand{\ph}{\phi}
\newcommand{\Ps}{\Psi}
\newcommand{\Ph}{\Phi}
\begin{document}
\title[Higher order approximation of analytic sets]{Higher order approximation of analytic sets by topologically equivalent algebraic sets}

\author{Marcin Bilski}
\address{Department of Mathematics and Computer Science, Jagiellonian University, \L ojasie\-wicza 6, 30-348 Krak\'ow, Poland}
\email{marcin.bilski@im.uj.edu.pl}

\author{Krzysztof Kurdyka}
\address{Universit\'e  Savoie Mont Blanc, CNRS, LAMA, UMR 5127, 73376 Le Bourget-du-Lac, France}
\email{Krzysztof.Kurdyka@univ-savoie.fr}

\author{Adam Parusi\'nski}
\address{Universit\'e Nice Sophia Antipolis, CNRS, LJAD, UMR 7351, 06108 Nice, France}
\email{adam.parusinski@unice.fr}

\author{Guillaume Rond}
\address{Aix-Marseille Universit\'e, CNRS, Centrale Marseille, I2M, UMR 7373, 13453 Marseille, France}
\email{guillaume.rond@univ-amu.fr}

\thanks{The authors were partially supported by ANR project STAAVF (ANR-2011 BS01 009). G. Rond was partially supported by ANR project SUSI (ANR-12-JS01-0002-01). M. Bilski was partially supported by the NCN grant 2014/13/B/ST1/00543.}

%\date{January 11, 2014}

\keywords{topological equivalence of singularities, Artin approximation, Zariski equisingularity}
\subjclass[2010]{32S05, 32S15, 13B40}

\begin{abstract}
It is known that every germ of
an analytic set is  homeomorphic to the germ of an algebraic set.
In this paper we show that the homeomorphism can be chosen in such a way
that the analytic and algebraic germs are tangent with any prescribed order of tangency.
Moreover, the space of arcs contained in the algebraic germ approximates 
the space of arcs contained in the analytic one, in the sense that they are identical up to a 
prescribed truncation order.  
\end{abstract}

%%%%%%%%%%%%%%%%%%%%%%%%%%%%%%%%%%%%%%%%%%%%%%%%%%%%%%
\maketitle

\section{Introduction}
The problem of approximation of analytic sets (or functions) by algebraic ones  
 is one of the most fundamental problems in singularity theory and an old subject of investigation
(see e.g. \cite{Ar}, \cite{BPR}, \cite{bochnakkucharz1984}, \cite{bochnak1984},  \cite {Ku}, \cite{Lempert}, \cite{mostowski}, \cite{Sa}, \cite{TJC}, \cite{whitney}).  
In general this problem has been considered by two different approaches.  

Firstly one may seek to 
approximate the germs of analytic sets by the germs of algebraic ones so that both objects are homeomorphic.  
 For instance, by  \cite{Ar} and \cite{TJC},  a germ of a coherent analytic set  with  an isolated singularity is analytically equivalent to a germ of an algebraic set (see e.g. Theorem 3 of \cite{bochnak1984} for a precise statement). 
 But in general an analytic set is not even locally diffeomorphically equivalent to an algebraic one (cf. \cite{whitney}). 
Nevertheless by a result of T. Mostowski  \cite{mostowski}  every analytic set is locally topologically equivalent to an algebraic one.

\begin{thm}[\cite {mostowski}, \cite {BPR}] \label{MoThm} 
Let $\K=\R$ or $\C$. Let $(X, 0)\subset(\K^n,0)$ be an analytic germ. Then there is a
homeomorphism $h:(\K^n , 0)\rightarrow (\K^n,0)$ such that $h(X)$ is the germ at $0$ of an algebraic subset of
$\K^n$.
\end{thm}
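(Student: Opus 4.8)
The plan is to encode the local topological type of $(X,0)$ in a finite system of \emph{polynomial} equations on the coefficients of a tower of Weierstrass polynomials --- a Zariski-equisingular presentation of $X$ --- to replace the analytic solution of that system by an algebraic one via Artin approximation, and to pass from the approximation back to a homeomorphism using the topological triviality of Zariski-equisingular families. It suffices to treat a hypersurface germ equipped with a linear projection: a general analytic germ reduces to this by a standard argument ($\K=\R$: if $g_1\ddo g_k$ generate the ideal of $X$ then $X=\{g_1^2+\cdots+g_k^2=0\}$ is the zero set of a single analytic germ; $\K=\C$: replace $X$ by a suitable hypersurface germ from which $X$ is recovered by a linear projection, and note that the homeomorphisms produced below can be chosen compatible with linear projections; cf.\ \cite{mostowski}, \cite{BPR}).

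So let $f$ be an analytic function germ at $0\in\K^m$, which after a generic linear change of coordinates $x=(x_1\ddo x_m)$ we may assume is $x_m$-regular. Weierstrass preparation writes $f$ as a unit times a monic polynomial $F_m\in\K\{x_1\ddo x_{m-1}\}[x_m]$; its discriminant $\Delta_m\in\K\{x_1\ddo x_{m-1}\}$ with respect to $x_m$ is, after a further generic linear change of $x_1\ddo x_{m-1}$, a unit times a monic polynomial $F_{m-1}\in\K\{x_1\ddo x_{m-2}\}[x_{m-1}]$ with the same zero set as $\Delta_m$, a fact recorded by divisibility relations $\Delta_m=u\,F_{m-1}^{a}$, $F_{m-1}^{b}=v\,\Delta_m$ with analytic $u,v$ and $uv$ a unit. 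Iterating down to $F_1$, which may be taken of the form $x_1^{d_1}$ (or a unit), produces a tower $F_1\ddo F_m$. Since the discriminant of a monic polynomial is a universal polynomial in its coefficients, the collection of all coefficients of the $F_i$ together with the cofactors is an analytic solution of an explicit system $(\mc S)$ of polynomial equations in these unknowns, subject to the constraint that the coefficients of $F_i$ involve only $x_1\ddo x_{i-1}$. The point of this presentation is that, by Varchenko's theorem in the complex case and its real analogue in \cite{BPR} --- both obtained by integrating a controlled Lipschitz vector field tangent to all strata and compatible with all projections of the tower --- any Zariski-equisingular family of such towers is ambient-topologically trivial.

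By Artin's approximation theorem, in the version producing solutions in the ring of algebraic power series (Nash germs) and respecting the nested dependence of the $F_i$ on the coordinates, the analytic solution of $(\mc S)$ can be approximated, to any prescribed order in $(x_1\ddo x_{m-1})$, by a solution with algebraic entries; this yields a tower $\wdt F_1\ddo\wdt F_m$ of Weierstrass polynomials with Nash coefficients satisfying the same relations, so that $\wdt X:=\{\wdt F_m=0\}$ is a Nash, hence algebraic, germ. Since $\wdt F_i$ is close to $F_i$, the two towers lie in a single local family of solutions of $(\mc S)$; connecting them within this family gives a continuous family $F_i^{(t)}$, $t\in[0,1]$, of towers with $F_i^{(0)}=F_i$ and $F_i^{(1)}=\wdt F_i$ that is Zariski equisingular in the parameter $t$ by construction. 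The triviality theorem of the previous paragraph then furnishes a homeomorphism germ of $(\K^m\times\K,0)$ over the $t$-line trivializing this family (and, in the complex case, compatible with the fixed linear projection); the induced homeomorphism between the fibres over $t=0$ and $t=1$ carries $\{f=0\}$ onto the algebraic germ $\wdt X$. In the complex case one descends this homeomorphism along the linear projection to an ambient homeomorphism of $\C^n$ carrying $X$ onto an algebraic germ; in the real case $m=n$ and the homeomorphism is already of the required form.

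The main obstacle is making the passage from the approximation to the homeomorphism rigorous. The Nash (algebraic) version of Artin approximation is classical, but one needs the approximation to be (a) of sufficiently high order that the approximate data is still a bona fide Zariski-equisingular tower, in generic coordinates and of the correct degrees, and (b) so close to the original that the entire segment $t\in[0,1]$ stays in the equisingular locus of the space of towers --- equivalently, that the connecting family is Zariski equisingular and not merely equisingular fibre by fibre. Securing (b) forces the nested structure of $(\mc S)$ to be treated with care, since nested Artin approximation can fail in general. A second, more technical point, needed only for $\K=\C$, is to verify that the trivializing homeomorphism of the family can be chosen compatible with the auxiliary linear projection so that it descends to $\C^n$. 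The remaining ingredients --- existence of the tower, universality of the discriminant relations, and topological triviality of Zariski-equisingular families --- are standard.
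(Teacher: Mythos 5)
Your overall strategy is the one actually used by Mostowski, \cite{BPR}, and this paper: encode the germ in a Zariski-equisingular tower of pseudopolynomials, replace the analytic coefficient data by algebraic data via Artin-type approximation, and transport the homeomorphism from the topological triviality of the connecting equisingular family. But two steps do not go through as written. First, the assertion that $\wdt X=\{\wdt F_m=0\}$ is ``a Nash, hence algebraic, germ'' is false: a Nash germ is in general only a union of some of the local branches of an algebraic set and need not itself be the germ of an algebraic subset of $\K^n$ (one branch of a nodal cubic at the node is a Nash germ whose Zariski closure is the whole cubic). This is precisely why the argument has a second, separate stage (Proposition \ref{Prop2} here, going back to Artin--Mazur and Bochnak--Kucharz): one must further compose with a Nash diffeomorphism, built from the Artin--Mazur presentation of the Nash set as a projection of an algebraic set, to land on a genuinely algebraic germ. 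As it stands your proof only shows that $(X,0)$ is homeomorphic to a Nash germ.

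Second, the step you yourself identify as the main obstacle --- that the segment joining the analytic tower to its algebraic approximation is Zariski equisingular ``by construction'' --- is exactly the point that ordinary Artin approximation, nested or not, does not deliver, and you leave it unresolved. Two nearby solutions of the polynomial system $(\mc S)$ need not be joined by a path of solutions, and even when they are, the discriminant divisibility identities must hold identically along the path, not fibre by fibre. The device that secures this is the P\l oski-type parametrized form of nested approximation (Theorem \ref{nest_ploski}): the approximation theorem produces an \emph{algebraic family} of solutions $a_i(x^i,z)$ in auxiliary variables $z$, with the original analytic solution recovered at $z=z(x)$; substituting an affine path in $z$ through $z(x)$ then gives a one-parameter family that solves $(\mc S)$ identically, so its Zariski equisingularity is automatic and Theorem \ref{theorem} applies. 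Without invoking this parametrized statement the connecting family has no construction. (A minor further point: for the reduction to a single pseudopolynomial the paper takes the product of the defining pseudopolynomials and uses that the trivialization preserves the zero set of each factor, which works uniformly for $\K=\R$ and $\K=\C$; the sum-of-squares reduction interacts badly with the complex root-counting underlying the discriminant tower.)
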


Secondly one may seek to approximate analytic germs by the algebraic ones with the  higher order tangency,  cf. e.g. \cite{B1}, \cite{B2}, \cite{BMT1}, \cite{FFW4}, \cite{FFW3}.   But the classical methods of such approximation do not provide the objects which are homeomorphic.  
In this paper we show how to construct approximations that satisfy  both requirements that is approximate with homeomorphic objects and with a given order of tangency.  

\begin{thm}\label{Main}
Let $\K=\R$ or $\C$. Let $(X, 0)\subset(\K^n,0)$ be an analytic germ. 
Then  there are $C,c>0$ and an open neighborhood $U$
of $0$ in $\K^n$ such that for every $m\in\mathbf{N}$
there are a subanalytic and arc-analytic homeomorphism 
$\phi_m: U\rightarrow\phi_m(U)\subset\K^n$
and an algebraic subset $V_m$ of $\K^n$ 
with the following properties:\vspace*{2mm}\\
(a)  $\phi_m(X\cap U)=V_m\cap\phi_m (U),$\\
(b) $||\phi_m(a)-a||\leq C^m||a||^m$ for every $a\in U$.
\vspace*{2mm}

Moreover, there is a nowhere dense analytic subset  $Z\subset U$ independent of $m$ 
such that $\varphi_m$ is real analytic on $U\setminus Z$ and such that the jacobian determinant of $\varphi_m$ on $U\setminus Z$ satisfies 
\vspace*{2mm}\\
(c)  $c \le |jacdet (\varphi_m) (x)| \le C$. 

\vspace*{2mm}
\end{thm}

In the above statement, and throughout the paper, "analytic" means "complex analytic" 
if $\K=\C$ and "real analytic" if $\K=\R$, unless we say explicitly "real analytic" or "complex analytic".

The proof of Theorem \ref{Main} is based on Mostowski's approach, its recent refinement  \cite{BPR}, and 
a new result on Zariski equisingularity  given in \cite{parusinski-paunescu}.  
This proof  will be divided into two parts. Firstly, using Popescu Approximation Theorem and a strong version of Varchenko's Theorem given in \cite{parusinski-paunescu},  we show that there is
an arc-analytic homeomorphism $\psi$ such that $\psi(X)$ is a Nash set tangent to $X$ at
$0$ with any prescribed order of tangency (cf. Proposition \ref{Prop1}, Section \ref{NashSect}).
Next, using Artin-Mazur's Theorem of \cite{AM} (cf. also \cite{bochnakkucharz1984}),  we prove that there is a Nash analytic diffeomorphism $\theta$ such that $\theta(\psi(X))$ is an algebraic
set tangent to $\psi(X)$ (cf. Proposition \ref{Prop2}, Section \ref{AlgSect}).

In Section \ref{section_cor}  we give an application of Theorem \ref{Main} to the space of analytic arcs 
on a given germ of analytic set. Namely, we show that for the space of truncated arcs there is 
an algebraic germ, homeomorphic to the original one, with the identical space of truncated arcs.  
We shall use the following notation.  For any analytic germ $(Z,0)\subset(\K^n,0),$ let 
$\mathcal{A}^{\R}_m(Z)$ denote the space 
of all truncations up to order $m$ of real analytic arcs contained in $(Z,0)$. For a complex analytic germ $(Z,0)$, $\mathcal{A}^{\C}_m(Z)$ denotes the space of all truncations up to order $m$ of complex analytic arcs contained in $(Z,0)$.

\begin{thm}\label{MainCor}
Let $\K=\R$ or $\C$.  Let $(X,0)\subset(\K^n,0)$ be a germ of real (resp. complex) analytic set. 
Then  for every $m\in\mathbf{N}$, there is an algebraic germ $(V_m,0)\subset (\K^n,0)$ with the same embedded topological type as $(X,0)$ such that
$\mathcal{A}^{\R}_m(X)=\mathcal{A}^{\R}_m(V_m)$ if $\K=\R$, and  $\mathcal{A}^{\C}_m(X)=\mathcal{A}^{\C}_m(V_m)$, $\mathcal{A}^{\R}_m(X)=\mathcal{A}^{\R}_m(V_m)$ if $\K=\C$.
\end{thm}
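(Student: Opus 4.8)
The plan is to deduce Theorem \ref{MainCor} directly from Theorem \ref{Main}, the only subtlety being that the homeomorphism $\phi_m$ provided by Theorem \ref{Main} is not assumed to preserve arcs truncated at order $m$ on the nose, so a little care is needed to pass from the metric estimate (b) to the equality of truncated arc spaces. First I would fix $(X,0)\subset(\K^n,0)$ and, given $m\in\mathbf N$, apply Theorem \ref{Main} not with the exponent $m$ but with a larger exponent $N=N(m)$ (for instance $N=m+1$, or larger if the book-keeping below demands it), obtaining $c>0$, a neighborhood $U$ of $0$, an arc-analytic subanalytic homeomorphism $\phi_N\colon U\to\phi_N(U)$ and an algebraic set $V_N$ with $\phi_N(X\cap U)=V_N\cap\phi_N(U)$ and $\|\phi_N(a)-a\|\le c^N\|a\|^N$ on $U$. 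Set $V_m:=V_N$. Property (a) immediately gives that $\phi_N$ restricts to a homeomorphism of germs $(X,0)\to(V_m,0)$, so $(V_m,0)$ has the same embedded topological type as $(X,0)$; this settles the topological part of the statement.

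Next I would compare the truncated arc spaces. Let $\gamma(t)=\sum_{j\ge 1}a_j t^j$ be a ($\K$-)analytic arc with $\gamma(0)=0$ lying in $(X,0)$; for small $t$ the composition $\phi_N\circ\gamma$ is a well-defined arc lying in $(V_m,0)$ (here one uses that $\phi_N$, being arc-analytic, sends analytic arcs to analytic arcs — this is exactly the reason Theorem \ref{Main} is stated with an arc-analytic homeomorphism rather than a merely topological one). The estimate (b) applied along the arc gives $\|\phi_N(\gamma(t))-\gamma(t)\|\le c^N\|\gamma(t)\|^N=O(t^N)$ as $t\to 0$, because $\|\gamma(t)\|=O(t)$. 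Since $N\ge m+1$, the arcs $\phi_N\circ\gamma$ and $\gamma$ agree modulo $t^{m+1}$, i.e. they have the same truncation of order $m$. Hence the order-$m$ truncation of every arc in $(X,0)$ equals the order-$m$ truncation of a (the corresponding) arc in $(V_m,0)$, which gives $\mathcal A^{\K}_m(X)\subseteq\mathcal A^{\K}_m(V_m)$; applying the same argument to $\phi_N^{-1}$ (which is again subanalytic and arc-analytic, with the symmetric estimate $\|\phi_N^{-1}(b)-b\|=O(\|b\|^N)$ deduced from (b) on a possibly smaller neighborhood) yields the reverse inclusion, hence equality. In the complex case the same computation applied to real analytic arcs lying in $(X,0)\subset(\C^n,0)$, using that $\phi_N$ and $\phi_N^{-1}$ are real arc-analytic, gives in addition $\mathcal A^{\R}_m(X)=\mathcal A^{\R}_m(V_m)$.

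A few routine points remain to be checked, none of which is a serious obstacle. One must make sure that for a fixed arc $\gamma$ the truncation $\phi_N\circ\gamma\bmod t^{m+1}$ depends only on $\gamma\bmod t^{m+1}$ — but this is immediate from the estimate above, since two arcs with the same order-$m$ truncation differ by $O(t^{m+1})$ and $\phi_N$ is (locally Lipschitz, being subanalytic and continuous, or at least) Hölder, so their images still differ by $o(t^m)$ after composing; alternatively one avoids this entirely by noting that $\mathcal A^{\K}_m$ is defined as a set of truncations, so it suffices to exhibit, for each truncation on one side, a single arc realizing it on the other side, which the argument above does. One should also confirm that $\phi_N^{-1}(b)-b=O(\|b\|^N)$: writing $a=\phi_N^{-1}(b)$, (b) gives $\|b-a\|\le c^N\|a\|^N$, and since $\|a\|\le\|b\|+c^N\|a\|^N\le 2\|b\|$ for $b$ near $0$, one gets $\|b-a\|\le c^N 2^N\|b\|^N$, as needed. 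The hardest conceptual point — and it is really the content of Theorem \ref{Main} rather than of this corollary — is that the approximating homeomorphism can simultaneously be taken arc-analytic and satisfy the metric estimate; given that, the present proof is essentially the observation that "arc-analytic homeomorphism $+$ order-$N$ tangency $\Rightarrow$ equality of order-$m$ truncated arc spaces for $N>m$."
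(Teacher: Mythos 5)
Your argument is essentially the paper's own proof: apply Theorem \ref{Main} with a large enough exponent, use arc-analyticity of $\phi$ and $\phi^{-1}$ to transport analytic arcs in both directions, and use the tangency estimate along an arc to see that the truncations are unchanged. For $\K=\R$ this is complete and correct (including your observation that one only needs to realize each truncation by some arc on the other side, and your derivation of the inverse estimate).

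The one step that does not work as written is the complex case. You apply the ``main computation'' to a $\C$-analytic arc $\gamma:(\C,0)\to(X,0)$ and assert that $\phi_N\circ\gamma$ is again a complex analytic arc because $\phi_N$ is arc-analytic. But arc-analyticity, as defined in the paper, only says that $\phi_N\circ\delta$ is real analytic for every \emph{real} analytic arc $\delta:I\to U$ with $I\subset\R$; it gives no holomorphy (nor even two-variable real analyticity) of $\phi_N\circ\gamma$ on a disc in $\C$. The paper avoids this by reducing the complex statement to the real one: a real analytic arc germ in a complex analytic germ $(Z,0)$ complexifies (uniquely) to a complex analytic arc germ in $(Z,0)$, since the holomorphic equations of $Z$ vanish on the real arc and hence, by the identity principle, on its complexification; conversely every complex arc germ in $(Z,0)$ restricts to a real one, and truncations correspond coefficientwise. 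Thus $\mathcal{A}^{\C}_m(X)=\mathcal{A}^{\C}_m(V_m)$ follows formally from $\mathcal{A}^{\R}_m(X)=\mathcal{A}^{\R}_m(V_m)$, which is the part your argument does establish. With that substitution your proof matches the paper's.
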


We also present an example showing that it is not enough in general to choose for $V_m$ a germ of algebraic set which has a high  tangency with $X$.

\begin{convention*}
 The constants denoted  $C,c$  may change from line to line.   We also use auxiliary constants $C_1, 
 C_2, etc.$.  They also are not fixed and may depend from line to line.  
 Sometimes we write  $C(m)$ to stress that this particular constant may depend on $m$. 
\end{convention*}

\section{Preliminaries}\label{preliminaries}
\subsection{Nash functions and sets}\label{PrelNash}

Let $\K=\C$ or $\mathbb{R}$. Let $\Omega$ be an open subset of
$\K^q$ and let $f$ be an analytic function on $\Omega$. We say that $f$ is a \emph{Nash
function} at $\zeta\in\Omega$ if there exist an open neighbourhood $U$ of $\zeta$ in $\Omega$
and a non zero polynomial $P\in\K[Z_1,\dots,Z_q,W]$ such that $P(z,f(z))=0$ for $z\in U$.  An 
analytic function on $\Omega$ is a Nash function if it is a Nash function at every point
of $\Omega$. An analytic mapping $\varphi:\Omega\to\K^N$ is a \emph{Nash mapping} if each of
its components is a Nash function on $\Omega$.

A subset $X$ of $\Omega$ is called a \emph{Nash subset} of $\Omega$ if for every
$\zeta\in\Omega$ there exist an open neighbourhood $U$ of $\zeta$ in $\Omega$ and Nash
functions $f_1,\dots,f_s$ on $U$, such that $X\cap U=\{z\in U \ ; \     f_1(z)=\dots=f_s(z)=0\}$. A
germ $X_\zeta$ of a set $X$ at $\zeta\in\Omega$ is a \emph{Nash germ} if there exists an open
neighbourhood $U$ of $\zeta$ in $\Omega$ such that $X\cap U$ is a Nash subset of $U$.
Note that $X_\zeta$ is a Nash germ if its defining ideal can be generated by convergent
power series algebraic over the ring $\K[Z_1,\dots,Z_q]$. For more details on real and complex Nash functions and sets see \cite{BCR}, \cite{Tw}.

%%%%%%%%%%%%%%%%%%%%%%%%%%%%%%%%%%%%%%%%%%%%%%%%%%%%%%

\subsection{Nested Artin-P\l oski-Popescu Approximation Theorem}\label{nestedploski}

We set $x=(x_1,\ldots,x_n)$ and $y=(y_1,\ldots,y_m)$. The ring of convergent power series  in $x_1,\ldots, x_n$ is denoted by $\K \{ x\}$. If $A$ is a commutative ring then the ring of algebraic  power series with coefficients in $A$ is denoted by $A\lg x\rg$.

The following result of \cite{BPR} is a generalization of P\l oski's result \cite{ploski1974}. It is a corollary 
of Theorem 11.4 \cite{Sp} which itself is a corollary of the N\'eron-Popescu Desingularization (see \cite{Po}, \cite{Sp} or \cite{Sw} for the proof of this desingularization theorem in whole generality or \cite{Qu} for a proof in characteristic zero).

\begin{thm}\label{nest_ploski}\cite{BPR}
Let $f(x,y)\in \K \lg x\rg[y]^p$ and let us consider a solution $y(x)\in\K \{x\}^m$ of $$f(x,y(x))=0.$$
 Let us assume that $y_i(x)$ depends only on $(x_1,\ldots, x_{\si(i)})$ where $i\lgm \si(i)$ is an increasing function. Then there exist a new set of variables $z=(z_1,\ldots, z_s)$, an increasing function $\t$, convergent power series $z_i(x)\in\K \{x\}$ vanishing at 0 such that $z_1(x),\ldots, z_{\t(i)}(x)$ depend only on $(x_1,\ldots, x_{\si(i)})$, and  an  algebraic power series vector solution $y(x,z)\in\K \lg x,z\rg^m$ of
 $$f(x,y(x,z))=0,$$ such that for every $i$, 
 $$y_i(x,z)\in\K \lg x_1,\ldots,x_{\si(i)},z_1,\ldots,z_{\t(i)}\rg \text{ and }y(x)=y(x,z(x)).$$
%Moreover, given $k\in \N$ we may always require that $y_i(x,z)-y_i(x) \in \ma ^k$, where $\ma =(x_1, ... , 
%x_n) \subset \K\{x,z\}$.  
\end{thm}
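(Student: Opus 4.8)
The plan is to deduce the statement from the nested approximation theorem for algebraic power series, Theorem 11.4 of \cite{Sp}, which is itself a consequence of the Néron–Popescu desingularization. Recall that Theorem 11.4 of \cite{Sp} asserts that the nested pair of rings
$$\bigl(\K\lg x_1,\dots,x_{\si(1)}\rg,\ \K\lg x_1,\dots,x_{\si(2)}\rg,\ \dots,\ \K\lg x_1,\dots,x_n\rg\bigr)$$
has the nested Artin approximation property with respect to the corresponding tower of analytic rings $\K\{x_1,\dots,x_{\si(i)}\}$: any nested system of polynomial equations over the algebraic power series that admits a nested convergent (analytic) solution also admits a nested algebraic power series solution, and moreover one can find such a solution in a smooth way so that the original analytic solution is recovered by substituting convergent power series for the new parameters. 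The first step is thus to set up the correspondence: by hypothesis $y_i(x)$ depends only on $(x_1,\dots,x_{\si(i)})$, so the given convergent solution $y(x)$ is exactly a nested analytic solution of $f(x,y)=0$ in the sense of that theorem.

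The second step is to apply the smoothness/parametrized version of Theorem 11.4: one obtains a finite set of new variables $z=(z_1,\dots,z_s)$, an increasing function $\t$, convergent power series $z_i(x)\in\K\{x\}$ with $z_1(x),\dots,z_{\t(i)}(x)$ depending only on $(x_1,\dots,x_{\si(i)})$, and an algebraic power series solution $y(x,z)\in\K\lg x,z\rg^m$ with the nested dependence $y_i(x,z)\in\K\lg x_1,\dots,x_{\si(i)},z_1,\dots,z_{\t(i)}\rg$, such that $f(x,y(x,z))=0$ identically and $y(x)=y(x,z(x))$. One needs to check that the nesting on the $z$-variables produced by \cite{Sp} is compatible with the nesting on the $x$-variables, i.e.\ that the $z$'s can be organized into blocks adapted to the same filtration $\si(1)\le\si(2)\le\cdots$; this is precisely what the nested formulation of Theorem 11.4 guarantees, since the desingularization is performed level by level along the tower. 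Płoski's original theorem \cite{ploski1974} is the case of a single level (no nesting), and the argument here is its nested refinement.

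The main obstacle — and the real content borrowed from \cite{Sp} — is the passage from ordinary Artin approximation to the nested version together with the smooth parametrization of the solution set. Plain Néron–Popescu gives, for each individual regular morphism $\K\lg x\rg\to\K\{x\}$, that $\K\{x\}$ is a filtered colimit of smooth $\K\lg x\rg$-algebras, which yields approximate solutions; but controlling the variable dependence (so that $y_i$ only involves $x_1,\dots,x_{\si(i)}$ and the newly introduced $z_j$ with $j\le\t(i)$) requires applying desingularization compatibly across the whole tower of subrings, which is the technical heart of Theorem 11.4 in \cite{Sp}. Granting that result, the proof is essentially a translation: verify the hypotheses of \cite[Theorem~11.4]{Sp} for our nested system, invoke it, and read off the conclusion in the stated form.
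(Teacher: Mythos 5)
Your proposal follows exactly the route the paper itself indicates: Theorem \ref{nest_ploski} is imported from \cite{BPR}, where it is derived from Theorem 11.4 of \cite{Sp}, itself a consequence of N\'eron--Popescu desingularization, so the approach is the same. One caution: Theorem 11.4 of \cite{Sp} as stated is the nested \emph{approximation} theorem (existence of nested algebraic power series solutions agreeing with the given convergent one to arbitrary finite order), whereas the parametrized P\l oski form you need --- the new variables $z$, the algebraic family $y(x,z)$, and the exact identity $y(x)=y(x,z(x))$ --- is not literally its conclusion; it is obtained in \cite{BPR} by running the \emph{proof} of that theorem, namely factoring $\K\lg x\rg[y]/(f)\to\K\{x\}$ through a nested smooth $\K\lg x\rg$-algebra via N\'eron--Popescu and presenting that smooth algebra \'etale-locally as a polynomial ring in the $z$'s. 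Your write-up attributes this parametrized, "smooth" form to the statement of Theorem 11.4 rather than supplying the factorization argument, so if this were meant as a self-contained proof that step would still need to be filled in.
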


%\begin{proof}
%Except the latter claim this theorem is proven in  \cite{BPR}.  This latter claim can be shown by 
%the following standard argument.  We consider only the case $k=1$.  
%Write $y(x) = y(0) + \sum_{j=1}^n x_j w_{j} (x)$.  Then $w(x)$ is a solution of 
%$$g(x,w(x))=  f(x, y(0) + \sum_{j=1}^n x_j w_{j} (x))=0.$$ 
%Let $w(x,z)$ be an algebraic power series vector solution of the above equation, given by the standard version of the theorem,  with convergent power series vector $z(x)$ so that 
%$w(x) = w(x,z(x))$.
%Then $y(x,z): = y(0) + \sum_{j=1}^n x_j w_{j} (x,z)$ solves $f(x,y(x,z))=0$ and satisfies the required properties.
%\end{proof}

%%%%%%%%%%%%%%%%%%%%%%%%%%%%%%%%%%%%%%%%%%%%%%%%%%%%%%%%%%%%%%%%%%%%%%%%%%%%%%%%%%%%%%%%%%%%%%%%%%%%%%%%%%%%%%%%%%%%%%%%%%%%%%%%%%%%%%%%%%%%%%%%%
\subsection{Arc-analytic maps and arc-wise analytic families}
Let $Z, Y$ be analytic spaces. A map $f:Z\rightarrow Y$ is called \emph{arc-analytic} if $f\circ\delta$ is 
real analytic for every real analytic arc $\delta:I\rightarrow Z,$ where $I=(-1, 1)\subset\mathbb{R}$ (cf. \cite{KKurdyka}). 
By \emph{an arc-analytic homeomorphism} we mean a homeomorphism $\varphi$ such that both $\varphi$ and 
$\varphi^{-1}$ are arc-analytic.  

Let $T$ be a nonsingular analytic space. We say that a map 
$f(t, z):T\times Z\rightarrow Y$ is \emph{an arc-wise analytic family in $t$} if it is analytic in $t$ and arc-analytic in $z$. This means 
that  for every real analytic arc $z(s):I\rightarrow Z,$ the map $f(t, z(s))$ is real analytic
and moreover, if $\K = \C,$ complex analytic with respect to $t$ (cf. \cite{parusinski-paunescu}).

%%%%%%%%%%%%%%%%%%%%%%%%%%%%%%%%%%%%%%%%%%%%%%%%%%%%%%%%%%%%%%%%%%%%%%%%%

\subsection{Algebraic Equisingularity of Zariski}\label{varchenko}
Notation:  Let $x=(x_1, \ldots, x_n) \in \K^n$. Then we set $x^i = (x_1, \ldots, x_i) \in \K^i$.

%%%%%%%%%%%%%%%%%%%%%%%%%%%%%%%%%%%%%%%%%%%%%%%%%%%%%%
\subsubsection{Assumptions} \label{pseudopolynomials}
Let $V$ be an analytic hypersurface in a neighborhood of the origin in $\K^l \times  \K^n$ and let $T= V\cap (\K ^l \times  \{0\} )$.  
 Suppose there are given pseudopolynomials
 \footnote{A pseudopolynomial is a polynomial in $x_i$ with coefficients 
 that are analytic in the other variables.
The pseudopolynomials $F_i$ that we consider are moreover distinguished polynomials in $x$, i.e. are  of the form $\displaystyle x_i^p+\sum_{j=1}^pa_j(x^{i-1})x_i^{p-j}$ with $a_j(0)=0$ for all $j$.  They may depend analytically on $t$ that is considered as a parameter.}
 \begin{align*}%\label{pseudopolynomiais:}
F_{i} (t, x^i )= x_i^{p_i}+ \sum_{j=1}^{p_i} a_{i-1,j} (t,x^{i-1})
 x_i^{p_i-j}, \qquad    i=0, ... ,n,
\end{align*}
$t\in \C ^ l$, $x^i \in \C ^i$, 
with analytic coefficients $a_{i-1,j}$, that satisfy 
\begin{enumerate}
\item 
$V=F_n^ {-1} (0)$,
\item
for all $i,j$, $a_{i,j}(t,0)\equiv 0$,
\item
$F_{i-1} $  vanishes on the set of those $(t, x^{i-1})$ for which $F_{i} (t, x^{i-1}, x_i)=0$, considered as an equation on $x_i$, has fewer complex roots than for generic $(t, x^{i-1})$, 
\item
Either $F_i(t,0)\equiv 0$ or $F_i \equiv 1$ (and in the latter case 
$F_k \equiv 1$ for all $k\le i$ by convention),
\item
$F_0\equiv 1$.
\end{enumerate}

For condition (3) it suffices that  the discriminant of $(F_{i})_{red}$, that is of $F_{i}$ reduced,  
divides $F_{i-1}$.  
Equivalently, the first not identically equal to zero  generalized discriminant of $F_{i}$ divides $F_{i-1}$, 
see subsection \ref{generalizeddiscriminants}.  

We call a system $\{F_i(t,x^i)\}$ satisfying the above conditions   \emph{Zariski equisingular}.  
We also say that the family of germs  $V_t := V\cap (\{t\}\times \K^n)$ is  \emph{Zariski equisingular} if such a system with $V=\{F_n=0\}$ exists, maybe after a local analytic change of coordinates $x$.  
Answering a question posed by O. Zariski,  A. Varchenko  \cite{varchenko1972,varchenko1973, varchenkoICM} 
showed that a Zariski equisingular family $V_t$ is locally topologically trivial. The following stronger version of Varchenko's Theorem follows from  Theorems 3.3 and 4.3 of \cite{parusinski-paunescu}.

\begin{thm}\label{theorem} {\rm (\cite{varchenko1972,varchenko1973, varchenkoICM}, \cite{parusinski-paunescu})}
If a system $\{F_i(t,x^i)\}$ is Zariski equisingular then the family $V_t$ is locally topologically trivial along $T$, that is  there exist $\e, \d>0$, $C,c>0$ and a homeomorphism
$$\Phi:B_\e\times \Omega_0\longrightarrow \Omega,$$
where $B_\e=\{t\in\K^l\  ;   \ \|t\|<\e\}$,  $ \Omega_0 = \{x\in \K^n \ ; \    \|x\|<\delta \}$
 and $\Omega$ is a  neighborhood of the origin in $\K^{l+n}$, that preserves the family $V_t$: 
 $\Phi^{-1}(V) = B_\varepsilon \times V_0$.  Moreover, $\Phi$ has a triangular form
 $$\Phi(t,x)=(t,\Psi(t,x))=(t,\Psi_1(t,x^1),\Psi_2(t,x^2),\cdots,\Psi_n(t,x^n))$$
 and satisfies the following additional properties: 
 \begin{enumerate}
 \item
 if $F_n$ is a product of pseudopolynomials then $\Phi$ preserves the zero set of each factor,
 \item
$\Phi$ is subanalytic (semi-algebraic if all $F_i $ are polynomials or Nash functions),
\item
$\Phi$ is an arc-wise analytic family in $t$ and $\Phi^{-1}$ is arc-analytic, 
\item
there is an analytic subset $Z\subset \Omega_0$ such that $\Ph$ is real analytic in the complement 
of $B_\varepsilon \times Z$ and such that  the jacobian determinant of $\Ph$ satisfies for 
$(t,x) \in  B_\varepsilon \times (\Omega_0 \setminus Z)$ 
$$ c \le |jacdet (\Ph) (t,x)| \le C .$$ 
 \end{enumerate}
 Moreover, if the multiplicity of $F_i(0,x^i)$ at $0\in\K^i$ is equal to $p_i$ for every $2\leq i\leq n$ then 
 for all $(t,x)\in B_\e\times \Omega_0$
 $$\left\|\frac{\partial \Psi}{\partial t}(t,x)\right\| \leq C\|\Psi(t,x)\|.$$
\end{thm}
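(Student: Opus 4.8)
The plan is to build $\Phi$ by induction on $n$, following the tower $F_0\equiv 1, F_1, \dots, F_n$, and to obtain the topological triviality together with the triangular form and properties (1)--(3) directly from Theorems 3.3 and 4.3 of \cite{parusinski-paunescu}; the genuinely new content is the derivative estimate at the end. For the base levels one has $F_i\equiv 1$ (hence $\Psi_i(t,x^i)=x_i$) or, for $i=1$, $F_1=x_1^{p_1}$ by condition (2), so $\Psi_1=x_1$. At the inductive step one is given a homeomorphism $\Phi_{i-1}(t,x^{i-1})=(t,\Psi_1,\dots,\Psi_{i-1})$ trivializing the subfamily cut out by $F_{i-1}$; by condition (3), read through the generalized discriminant, this in particular trivializes the locus where $F_i(t,x^{i-1},\cdot)$ has fewer than the generic number of roots. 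Off that locus the roots $\xi_{i,1}(t,x^{i-1}),\dots,\xi_{i,p_i}(t,x^{i-1})$ of $F_i$ are locally analytic and pairwise distinct, and one constructs $\Psi_i(t,x^{i-1},x_i)$ as a homeomorphism of the $x_i$-line, depending on $(t,x^{i-1})$, carrying the configuration $\{\xi_{i,k}(t,\cdot)\}_k$ onto $\{\xi_{i,k}(0,\cdot)\}_k$ after precomposition with $\Phi_{i-1}$.

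The point is to perform this interpolation by the canonical, symmetric, weighted formula of \cite{parusinski-paunescu} rather than by a partition of unity: then $\Psi_i$ is single-valued despite the monodromy of the $\xi_{i,k}$, it is a subanalytic (semialgebraic, in the Nash or polynomial case) function of $(t,x^i)$, and it is $\K$-analytic in $t$ after composition with any real analytic arc $s\mapsto x(s)$, so that $\Phi$ is arc-wise analytic in $t$ and, symmetrically, $\Phi^{-1}$ is arc-analytic. Since the formula only sees the root configuration, if $F_n$ factors then $\Psi_i$ preserves the subconfiguration of roots of each factor, hence preserves its zero set; composing the $\Psi_i$ gives $\Phi$ with all of (1)--(3).

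The main obstacle is the bound $\left\|\frac{\partial\Psi}{\partial t}(t,x)\right\|\le C\|\Psi(t,x)\|$, and this is where the multiplicity hypothesis is used. If $\mult_0 F_i(0,x^i)=p_i$ then each coefficient $a_{i-1,j}(0,x^{i-1})$ vanishes to order at least $j$ at the origin, so $\|\xi_{i,k}(0,x^{i-1})\|\le C'\|x^{i-1}\|$, and the same homogeneity-type control persists for $t$ near $0$; consequently $\|x\|$ and $\|\Psi(t,x)\|$ are comparable uniformly in $t$, and the displacement of the roots as $t$ varies is of order $\|x\|$. Differentiating the weighted interpolation formula, $\partial_t\Psi_i$ is a combination of the $\partial_t\xi_{i,k}$ and of $\partial_t\Psi_{i-1}$, each controlled by the corresponding $\|\Psi\|$, and one propagates the estimate up the tower. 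The delicate points are (a) verifying that the canonical construction really is analytic in $t$ along arcs, and (b) the uniform-in-$t$ bookkeeping of the vanishing orders of the roots needed to upgrade ``displacement $\lesssim\|x\|$'' to ``$\lesssim\|\Psi(t,x)\|$''; the rest is formal.
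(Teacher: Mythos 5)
The paper does not prove this statement at all: it is imported, with attribution, from Theorems 3.3 and 4.3 of \cite{parusinski-paunescu} (refining Varchenko), so there is no internal proof to compare yours against. Your proposal is essentially the same move --- you defer the existence of the trivialization, its triangular, subanalytic, arc-wise analytic properties and the derivative bound to those theorems, and your inductive sketch (lifting the trivialization level by level through the tower $F_1,\dots,F_n$ by the symmetric weighted interpolation of the roots, with the multiplicity hypothesis giving $\|\Psi(t,x)\|$ comparable to $\|x\|$) is a faithful outline of the construction there; the two ``delicate points'' you explicitly leave open (arc-wise analyticity and single-valuedness of the interpolation, and the uniform-in-$t$ root estimates yielding $\left\|\frac{\partial \Psi}{\partial t}\right\|\leq C\|\Psi\|$) are precisely the content of the cited results, which is exactly where the paper leaves them too.
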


We call the map $\Ph$ satisfying the conditions (1)-(4) \emph{an arc-wise analytic trivialization}. 

\begin{rem}
The last condition can be written equivalently, maybe for different $\e, \d>0$, $C,c>0$,  as 
 $$c \|x\|\leq \|\Psi(t,x)\| \leq C\|x\|, $$
see 
Propositions 1.6 and 1.7  of \cite{parusinski-paunescu}. 
That means geometrically that the trivialization $\Ph$ preserves the size of the distance to the origin. 
\end{rem}

\begin{rem}
The condition (1) of the conclusion  can be given much stronger form, cf. Proposition 1.9 of 
\cite{parusinski-paunescu}.  For any analytic function $G$ 
dividing $F_n$, there are constants $C,c>0$, depending on $G$,  such that 
 $$c |G(0,x)|\leq |G(\Ph(t,x))| \leq C|G(0,x)| .$$
\end{rem}

%\textcolor{red}{Comment : 
%I have changed the constant in the formula from $C_1$ to $C$.  Before there was 
%"there is a constant $C>0$" and $C_1$ in the formula .}

%Here by $\Phi$ being arc-wise analytic in $t$ we mean that for every 
%real analtic arc $x(s):(-1,1)\to \Omega_0$ the map $(t,s) \to \Phi (t,x(s))$ is analytic, cf. \cite{parusinski-paunescu}.
%  By $\Phi^{-1}$ being arc-analytic 
%we mean that for every real analtic arc $(t(s),x(s)):(-1,1)\to \Omega$, the map $s\to  \Phi^{-1} (t(s),x(s))$ is analytic.   

%%%%%%%%%%%%%%%%%%%%%%%%%%%%%%%%%%%%%%%%%%%%%%%%%%%%%%
 
%%%%%%%%%%%%%%%%%%%%%%%%%%%%%%%%%%%%%%%%%%%%%%%%%%%%%%

\section{Nash approximation of analytic sets}\label{NashSect}

\subsubsection{Generalized discriminants}\label{generalizeddiscriminants}(see e.g. \cite{whitneybook} Appendix IV)
Let $f(T) = T^p + \sum_{j=1}^ p a_iT^{p-i} = \prod_{j=1}^ p (T-T_i)$.  Then the expressions 
$$
 \sum_{r_1, \ldots r_{j-1}} \,  \prod_{k< l, k,l \ne r_1, \ldots r_{j-1}}  (T_k-T_l)^2
$$
are symmetric in $T_1, \ldots , T_p$ and hence polynomials in $a=(a_1, \ldots, a_{p}) $.  We denote 
these polynomials by $\Delta_j (a)$.  
Thus $\Delta_1$ is the standard discriminant and $f$ has exactly $p-j$ distinct roots if and only if 
$\Delta_1=\cdots = \Delta _{j}=0$ and $\Delta_{j+1}\neq 0$.

%%%%%%%%%%%%%%%%%%%%%%%%%%%%%%%%%%%%%%

\subsubsection{Construction of a normal system of equations}\label{NormalSystem}
 We recall here the main construction of \cite{mostowski}, \cite{BPR}.  
 
 Let $g_1, \ldots, g_l \in\K\{x\}$ be a finite set of  pseudopolynomials:  
   \begin{align*}%\label{polynomiai:f_n}
g_{s} ( x)= x_n^{r_s}+ \sum_{j=1}^{r_s} a_{n-1,s,j} (x^{n-1}) x_n^{r_s-j}.
\end{align*}
We arrange the coefficients $a_{n-1,s,j}$ in a row vector  
$a_{n-1} \in \K\{x^{n-1}\}^{p_n}$, $p_n=\sum_s r_s$.  
Let $f_n$ be the product of the $g_s$'s.  The generalized discriminants $\Delta_{n,i} $ of $f_n$ are 
polynomials in $a_{n-1}$.    Let $j_n$ be a positive integer such that  
 \begin{align}\label{discriminants:n}
 \Delta_{n,j_n}  ( a_{n-1} ) \not \equiv 0\quad  \text { and } \quad 
\Delta_{n,i} ( a_{n-1} )\equiv 0  \text { for } i<j_n   . 
\end{align}
 Then, after a linear change of coordinates $x^{n-1}$, we may write 
    \begin{align*}%\label{polynomiai:f_n-1}
 \Delta_{n,j_n} ( a_{n-1} ) =  u_{n-1} (x^{n-1})\left( x_{n-1}^{p_{n-1}}
 + \sum_{j=1}^{p_{n-1}} a_{n-2,j} (x^{n-2}) x_{n-1}^{p_{n-1}-j} \right) 
\end{align*}
where $u_{n-1}(0)\ne 0$ and for all $j$, $a_{n-2,j}(0)=0$,  and 
$$
f_{n-1} = x_{n-1}^{p_{n-1}}+
  \sum_{j=1}^{p_{n-1}} a_{n-2,j} ( x^{n-2}) x_{n-1}^{p_{n-1}-j} $$
is the Weierstrass polynomial associated to   $ \Delta_{n,j_n}$. We denote  
by $a_{n-2} \in \K\{x^{n-2}\}^{ p_{n-1}}$  the vector 
of its coefficients $a_{n-2,j}$.   
%Let $j_{n-1}$ be the positive integer such that the first $j_{n-1}-1$ generalized discriminants $\Delta_{n-1,i} $ 
%of $f_{n-1}$ are identically zero and  $\Delta_{n-1,j_{n-1}} $ is not.  Then again we define 
%$f_{n-2} ( x^{n-2})$ as the Weierstrass polynomial associated to  $\Delta_{n-1,j_{n-1}}  $. 
 
Similarly we define recursively a sequence of pseudopolynomials $f_{i} (  x^i )$, $i=1, \ldots, n-1$, such that 
 $f_i= x_i^{p_i}+ \sum_{j=1}^{p_i} a_{i-1,j} (x^{i-1}) x_i^{p_i-j}  $ is the Weierstrass polynomial associated to the first 
 non identically equal to zero generalized discriminant $\Delta_{i+1,j_{i+1}} ( a_{i} )$ of $f_{i+1}$, 
where  we denote in general $a_{i}= (a_{i,1} , \ldots , a_{i,p_{i+1}} )$ and 
  \begin{align}\label{polynomials:f_i}
 \Delta_{i+1,j_{i+1}} ( a_{i} ) =  u_{i} (x^{i})  \left(x_i^{p_i}+ \sum_{j=1}^{p_i} a_{i-1,j} (x^{i-1}) x_i^{p_i-j}  \right) ,  
 \quad i=0,...,n-1.  
\end{align}
 Thus the vector  
of functions $a_i$ satisfies  
  \begin{align}\label{discriminants:i}
\Delta_{i+1,k} ( a_{i} )\equiv 0 \qquad k<j_{i+1}   ,  \quad i=0,...,n-1. 
\end{align}
This means in particular that 
  \begin{align*}%\label{discriminants:0}
\Delta_{1,k} ( a_{0} ) \equiv 0 \quad \text {for } k<j_1  \text { and }  \Delta_{1,j_1} ( a_{0} ) \equiv u_0 ,
\end{align*}
where $u_0$ is a non-zero constant.  

\begin{rem}\label{mult}
At each step of this construction we may use a linear change of coordinates in order to assume that the multiplicity of $f_i$ at the origin is equal to $p_i$, that is $f_i$ as a power series does not contain  monomials of degree smaller than $p_i$.  Equivalently it means that for all $j$, $\mult_0 a_{i-1,j}\ge j$.   We will assume  in the following that this condition is satisfied for every $i$.  
\end{rem}

%%%%%%%%%%%%%%%%%%%%%%%%%%%%%%%%%%%%%%%%%%%%%%

\subsubsection{Approximation by Nash functions}\label{ApproxNashFunct} 
If we consider \eqref{polynomials:f_i} and 
\eqref{discriminants:i} as a system of polynomial equations on $a_i(x^i)$, $u_i (x^i)$ then, by construction,  
this system admits convergent solutions.  Therefore, by Theorem \ref{nest_ploski},   there exist a new set of variables $z=(z_1,\ldots, z_l)$, an increasing function $\tau$,  convergent power series $z_i(x)\in\K \{x\}$ {vanishing at 0}, algebraic power series $u_i (x^i,z)  \in\K \langle x^{i},z_1,\ldots,z_{\tau(i)}\rangle$ and  vectors of algebraic power series 
$a_i(x^i,z) \in\K \langle x^{(i)},z_1,\ldots,z_{\tau(i)}\rangle^{p_{i+1}}$
 such that the following holds:
\begin{align*}
& z_1(x),\ldots, \,z_{\tau(i)}(x) \text{ depend only on }(x_1,\ldots, x_{i}),\\
& a_i(x^i,z),\ u_i (x^i,z) \text{ are solutions of  }\eqref{polynomials:f_i},  
\eqref{discriminants:i}, \\
& a_i(0,z)\equiv 0,\\
& \text{and }a_i(x^i)= a_i(x^i,z(x^i)),\   u_i(x^i)= u_i(x^i,z(x^i)).\\
\end{align*}
Then we define 
\begin{align*}%\label{polynomiaisdef:F_n(t)}
 F_n(z,x) = \prod_s G_s(z,x)  , \quad G_{s} (z, x)=  x_n^{r_s}+ 
 \sum_{j=1}^{r_s} a_{n-1,s,j} \left(x^{n-1}, z(x^{n-1})-z\right) x_n^{r_s-j},  
\end{align*}
 \begin{align*}%\label{polynomiaisdef:F_i(t)}
 F_i(z,x) = x_i^{p_i}+ \sum_{j=1}^{p_i} a_{i-1,j} \left(x^{i-1}, z^{\t(i-1)}(x^{i-1})- z^{\t(i-1)}\right) x_i^{p_i-j} , \quad i=0, \ldots , n-1.
\end{align*}
Finally we set $F_0\equiv 1$.  

We consider the system $\{F_{i} (z, x)\}$ as a system of pseudopolynomials in $x$ 
parameterized by $z$.  If we substitute $z=0$ we obtain the original normal system of equations.  
The system $\{F_{i} (z, x)\}$ is  Zariski equisingular.  Indeed,  $a_{i,j} (0,z) \equiv 0$  and hence the condition (2) of Assumptions 2.4.1 is satisfied.  Moreover for every $F_i$, $F_{i-1}$ is the first not identically equal to zero generalized discriminant, and hence the condition (3) of
 Assumptions 2.4.1 is satisfied.   
 Therefore Theorem \ref{theorem} implies  the following lemma. 

\begin{lem}\label{lemma}
There are $\e, \d>0$, $C,c>0$ and a subanalytic, arc-wise analytic as a family in $z$,   an arc-analytic  homeomorphism
\begin{align}
\Ph (z,x) = (z, \Ps (z,x)) : B_{\varepsilon} \times \Omega_0 \to \Omega, 
\end{align}
where $B_{\varepsilon} = \{z\in \K^l \ ; \    \|z\|<\varepsilon\}$, $ \Omega_0 = \{x\in \K^n \ ; \    \|x\|<\delta \}$,
 and $\Omega$ is a  neighborhood of the origin in $\K^{l+n}$, 
  preserving the sets $\{G_s=0\}$ for $1\leq s\leq l$, i.e. $\Phi ^{-1} (\{G_s=0\}) = B_\varepsilon \times (\{G_s=0,z=0\})$.  
There is an analytic nowhere dense subset $Z\subset \Omega_0$ such that $\Ph$ is real analytic in the complement 
of $B_\varepsilon \times Z$ and  such that  the jacobian determinant of $\Ph$ satisfies for $(z,x) \in  B_\varepsilon \times (\Omega_0\setminus Z)$ 
\begin{align}\label{jacobian}
c\le |jacdet (\Ph) (z,x)| \le C .
 \end{align}
 Moreover, there exists a constant $C_1>1$ such that for all $(z,x)\in B_\e\times \Omega_0$
 \begin{align}\label{boundedder}
&  \left\|\frac{\partial \Psi}{\partial z}(z,x)\right\| \leq C_1\|\Psi(z,x)\|  \\ \notag
&  C_1^{-1} \|x\| \leq \|\Psi(z,x)\| \leq C_1\|x\|
 .\end{align}
 \end{lem}

Let us consider for $m\in \N$ 
$$
z(x) = z_m(x) + \tilde z_m (x),
$$
where $ z_m(x)$ is the $m$-th Taylor polynomial of $z(x)$.  %Then $\| \tilde z_m(x) \| = O(\|x\|^{m+1})$. 
We shall use in the sequel the following bounds that hold for $x$ sufficiently close to the origin.
\begin{align}\label{taylorbounds}
 \|\tilde z_m(x)\| \le C(m)  \|x\|^{m+1}, \qquad  \|\frac {\partial \tilde z_m}{\partial x} (x)\| \le C(m)  \|x\|^{m} .
 \end{align} 
By  classical formulae we may bound $C(m) \le C \sup_{\|x\|<(3/2)\delta '} \|z_\C (x)\| \delta'^{-(m+1)}$, where  
 $z_\C$ is  the complexification of $z$ (if $\K=\R$), that we suppose defined on $\|x\|<2\delta '  $, $0<\delta'< (1/2) \delta$, and  $C$ is a universal constant. Then \eqref{taylorbounds}  holds on  $\|x\|\le \delta '  $.

Set  
\begin{align*}%\label{polynomiaisdef:F_n(t)}
 \tilde f_n(t,x) & = \prod_s \tilde g_s(t,x)  , \quad \tilde g_{s} (t, x)= G_{s} (t\tilde z_m(x), x) \\
\tilde  f_i(t,x) & =  F_i( t\tilde z_m(x), x) , \quad i=0,\ldots, n-1.
\end{align*}

The system $\{\tilde f_{i} (t, x)\}$ is a deformation of  the original normal system of equations, given by  $t=0$,   
to a Nash system of equations, given by $t=1$.  It is Zariski equisingular by construction.  
We show that its arc-wise analytic trivialization can be induced from  $ \Phi$ given by Lemma \ref{lemma}.

%\textcolor{red}{Comment: The formula for $\varphi$ has been corrected.  The proof below has been changed completely. }
The construction of $\Ph$ of Theorem \ref{theorem} given in \cite{parusinski-paunescu} is  universal, as we explain below, 
 provided the Whitney Interpolation map, $\ps$ in the notation of \cite{parusinski-paunescu}, is fixed.  
Thus the map  of Lemma \ref{lemma}  is of the form
$$
\Ph (z,x) = (z, \Ps (z,x))= (z, \Ps_1 (z,x^1),  \Ps_2 (z ,x^2),  \ldots , \Ps_n (z,x^n)) $$ 
and is constructed inductively as follows.   
Denote for short $x'= x^{n-1}$ and  let 
$$\Ph' (z,x') = (z, \Ps' (z,x'))= 
 (z, \Ps_1 (z,x^1),   \ldots , \Ps_{n-1} (z,x')) .$$
Let  $\eta (z,x') = (\eta_1 (z,x'), ... , 
\eta_{p_n} (z,x')) $ be the vector of the roots of $F_n(z,x',x_n)$ (it is denoted by $a (t,x') $ in \cite{parusinski-paunescu}).  
Then by (3.5) of \cite{parusinski-paunescu}, $\Ps _n $ is defined by 
\begin{align}\label{psn}
\Ps _n (z,x', x_n ) =  \ps (x_n, \eta (0,x'), \eta ( z, \Ps' (z,x'))) , 
\end{align} 
where $\ps$ is the Whitney Interpolation map, see \cite{parusinski-paunescu} Appendix I.  

In our case, the coefficients, and hence the roots, of  $F_i (z,x^{i-1}, x_i)$, depend only on $z^{\tau_{i-1}}, x^{i-1}$.  
Hence $\Ph$ is of the form   
$$
\Ph (z,x) = (z, \Ps (z,x))= (z, \Ps_1 (x^1),  \Ps_2 (z^{\tau(1)} ,x^2),  \ldots , \Ps_n (z^{\tau(n-1)},x^n)).$$
Now let 
$$
H (t,x)= (t, h (t,x))= (t, h_1 (x^1),  h_2 (t ,x^2),  \ldots , h_n (t,x^n)).$$
 be the arc-wise analytic trivialization constructed by the same recipe for 
the system $\{\tilde f_{i} (t, x)\}$ and let 
$$H' (t,x') =(t, h' (t,x' ))= (t, h_1 (x^1),  h_2 (t ,x^2),  \ldots , h_{n-1} (t,x^{n-1}))$$
 be the trivialization of  $\{\tilde f_{i} (t, x')\}_{i\le n-1}$ obtained in the inductive step.  
Then $h$ can be deduced from $\Ps$ by the following recursive formula. 

\begin{lem}\label{lemma2}
\begin{align}\label{varphi}
h (t,x) =   \Ps ( t\tilde z_m( h' (t,x')), x).
\end{align}
\end{lem}

\begin{proof} 
The above formula means that for each $i=1, \ldots, n$
\begin{align}\label{varphi_i}
h_i (t,x^{i}) =    \Ps_i ( t\tilde z^{\tau(i-1)}_m( h^{i-1} (t,x^{i-1})), x^{i}).
\end{align}
In particular, $\Ps_1$ is independent of $z$, $h_1$ is independent of $t$ and $h_1 (x_1) = \Ps_1(x_1)$.  

We assume by induction that $h' (t,x') =   \Ps' ( t\tilde z_m^{\tau(n-2)}
( h^{n-2} (t,x^{n-2})), x' )$ and then show that 
\eqref{varphi_i} holds for $i=n$.
\begin{align*}%\label{pshn}
& \Ps _n ( t\tilde z_m( h' (t,x')), x', x_n) \\ 
& =  \ps (x_n, \eta (0,x'), \eta (  t\tilde z_m( h' (t,x')), \Ps' ( t\tilde z_m( h' (t,x')),x')))  \\
& =  \ps (x_n, \eta (0,x'), \eta (  t\tilde z^{\tau(n-1)}_m( h' (t,x')), \Ps' ( t\tilde z^{\tau(n-2)}_m( h^{n-2} (t,x^{n-2})), x' )))  \\
& =  \ps (x_n, \eta (0,x'), \eta (  t\tilde z^{\tau(n-1)}_m( h' (t,x')), h' (t,x')))  \\
& =  \ps (x_n, \xi (0,x'), \xi ( t, h' (t,x')))  = h_n(t,x),
\end{align*} 
where    $ \xi ( t, h' (t,x'))$ is the vector of the roots of $\tilde f_n  (t, h' (t,x') , x_n) $.  
\end{proof}

% Fix $R>1$.  Then to show the last part it is enough to note that we may choose $\delta'$ so that 
%$$
%\|t\tilde z_m(x)\|< \varepsilon 
%$$
%for $\|x\|<\delta '$,  $|t|<R$,  and all $m\in \N$.  

%%%%%%%%%%%%%%%%%%%%%%%%%%%%%%%%%%%%%%%%%%%%%%%%%%%%%%%

%%Write $\varphi(t,x) = (t, \psi(t,x))$, i.e. $\psi(t,x):=\Psi(t\tilde z_m(x),x)$.  
%%Then there is a constant $C>0$ such that we have for $x$ close to the origin anf $t$ from a neighborhood of $[0,1]$ in $\R$ 
%\begin{align} %\label{boundedder}
%%\left \| \frac {\partial h} {\partial t} (t,x))\right\|  \le  C C(m) \|x\|^{m+2} , 
%\end{align}

In particular, by \eqref{boundedder} and \eqref{varphi}
 \begin{align}\label{boundh}
 \|h(t,x) \|\le C_1 \|x\|.
 \end{align}

There is a constant $C>0$ such that we have for $x$ close to the origin and $t$ from a neighborhood of $[0,1]$ in $\R$ 
\begin{align}
\left \| \frac {\partial h} {\partial t} (t,x)\right\|  \le  C C(m) \|x\|^{m+2} , 
\end{align}
where $C(m)$ is given by \eqref {taylorbounds}.  Indeed, this is obvious for $h_1$ since $h_1$ is independent of $t$.  
The inductive step then follows from  the identity \eqref{varphi} and the inequalities \eqref{boundedder}, \eqref {taylorbounds} and \eqref{boundh}:
\begin{align*}%\label{boundedder}
\left \| \frac {\partial h} {\partial t} (t,x)\right\| 
 & \le  \| \frac {\partial \Ps } {\partial z}  (t\tilde z_m(h'(t,x')),x)\|  \times  \\ \notag 
&  \times   ( \|\tilde z_m(h'(t,x'))\|  
  +  \|t \frac{\partial \tilde z_m } {\partial x} 
 (h'(t,x'))\| 
\left \|  {\partial h'} /{\partial t} (t,x')\right\|  ) \\ \notag
& \le  C'  C(m) \|\Ps (t\tilde z_m(h'(t,x')),x)\|  \|x\|^{m+1} \\ \notag
& \le  C C(m) \|x\|^{m+2} .  
\end{align*} 
Then by integration we obtain
\begin{align}\label{boundedder2}
 \| h (1,x) - x \|   \le  C C(m) \|x\|^{m+2}.
\end{align}
This bound is crucial for showing  the following result.

\begin{prop}\label{Prop1}
Let $X$ be an analytic subset of an open neighborhood $U_0$ of 
$0\in\K^n$ with $0\in X.$ Then  there are constants $C,c>0$ and an open neighborhood $U\subset U_0$
of $0$ in $\K^n$ such that for every $m\in\mathbf{N}$, %$m\ge 1$, 
there are a subanalytic  arc-analytic homeomorphism $\phi_m:U\rightarrow\phi_m(U)\subset\K^n$ and
a Nash subset $V_m$ of $\phi_m(U)$
with the following properties:
\vspace*{2mm}\\
(a)  $\phi_m(X\cap U)=V_m \cap \phi_m(U),$\\
(b) $\|\phi_m(a)-a\|\leq C^m\| a\|^m$ for every $a\in U.$ 
\vspace*{2mm}\\
Moreover,  there is a nonwhere dense  analytic subset $Z\subset U$ such that $\ph_m$ is real analytic in the complement of $Z$  and 
$$ c\le |jacdet (\varphi_m) (x) | \le C ,$$
for $x \in  U\setminus Z$.
\end{prop}

\begin{proof}
We may assume that $X$ is defined by the pseudopolynomials  
$g_1, \ldots, g_l \in\K\{x\}$. 
We set $\hat g_i(x) = G_i (1,x)$ and we denote by $V_m$ the zero locus of the $\hat g_i$. 
Then, by Lemma \ref{lemma2}, $X\cap U$ is homeomorphic to $V_m\cap \phi_m (U)$ where $U=\{x\in\K^n \ ; \  \|x\|<\delta'\}$ 
and the homeomorphism $\phi_m (x)$  is obtained  by  setting $t=1$ in \eqref{varphi} 
$$
\phi_m (x) = \Ps ( \tilde z_{m-2} (x), x).$$
(We may assume that $m>1$.) Condition (b) now follows from \eqref{boundedder2}.  
 
The last claim of the proposition follows from \eqref{jacobian}.  Indeed,  $\Ps_i $ depends only on $z_1, ..., z_{\tau_{i-1}}$ and  $x^{i}$  and therefore  
%$$
%\frac {\partial } {\partial y_i} \Ps_j (\tilde z_m(y), y) = 
% \frac {\partial \tilde z_m } {\partial y_i} (y) \frac {\partial  \Ps_j} {\partial z} ( \tilde z_m(y), y)  + 
%\frac {\partial  \Ps_j } {\partial x_i}( \tilde z_m(y), y) 
%$$
$$\frac{\partial}{\partial y_i}\Psi_j(\tilde z_m(y),y)=\sum_{k=1}^s\frac{\partial \tilde z_{m,k}}{\partial y_i}(y)\frac{\partial \Psi_j}{\partial z_k}(\tilde z_m(y),y)+\frac{\partial \Psi_j}{\partial x_i}(\tilde z_m(y),y))$$ 
is non-zero only for $j\ge i$.  (Formally the above formula makes sense only for $\K=\R$.  In the complex case we should consider the derivatives with respect to $\re y_i$ and $\im y_i$.)  Moreover, for $i=j$ we have $\frac {\partial } {\partial y_i} \Ps_j ( \tilde z_m(y), y) = \frac {\partial  \Ps_j} {\partial x_i} ( \tilde z_m(y), y) $, since $z_1(x), ..., z_{\tau_{i-1}}(x)$ 
depend only on $x^{i-1}$.  Therefore, taking into account that the jacobian matrix of $\Ph (z,x)$ is 
block triangular, 
$$jacdet (\varphi_m) (y) = jacdet (\Ps (\tilde z_m(y), y)) = jacdet (\Ph)  (\tilde z_m(y), y), $$
and the claim now follows from \eqref{jacobian}. 
\end{proof}

%%%%%%%%%%%%%%%%%%%%%%%%%%%%%%%%%%%%%%%%%%%%%%

%%%%%%%%%%%%%%%%%%%%%%%%%%%%%%%%%%%%%%%%%%%%%%%%%%%%%%%%%%%%%%%%%%%%%%%%%%%%%%%
\section{Algebraic approximation of Nash sets}\label{AlgSect}
The main result of this section is the following proposition. For any $\K$-differ\-entiable map $\phi:U\subset\K^n\rightarrow\K^n,$ let $J_{\phi}$ denote the map assigning to every $a\in U$ the jacobian matrix of $\phi$ at $a$.
\begin{prop}\label{Prop2}
Let $X$ be a Nash subset of an open neighborhood $U_0$ of 
$0\in\K^n$ with $0\in X.$ Then  
there are a constant $C>0$ and an open neighborhood $U\subset U_0$
of $0$ in $\K^n$ such that for every $m\in\mathbf{N}$
there are a Nash  diffeomorphism $\phi_m:U\rightarrow\phi_m(U)\subset\K^n$ and
an algebraic subset $V_m$ of $\K^n$ 
with the following properties:
\vspace*{2mm}\\
(a)  $\phi_m(X\cap U)=V_m\cap\phi_m (U),$\\
(b) $||\phi_m(a)-a||\leq C^m||a||^m$ for every $a\in U.$
\vspace*{2mm}\\
Moreover, the sequence $(J_{\phi_m})_{m\in\mathbf{N}}$ converges uniformly on $U$ to the constant map assigning to every $a\in U$ the identity matrix.
\end{prop}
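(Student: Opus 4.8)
The plan is to use the Artin--Mazur theorem (as in \cite{AM}, \cite{bochnakkucharz1984}), which realizes a Nash set $X$ near $0$ as a local analytic branch of an algebraic set, together with a Nash diffeomorphism, and then to \emph{parametrize} this construction in order to gain the higher order of tangency. More precisely, starting from the Nash equations $h_1,\ldots,h_k$ defining $X$ near $0$, one writes each $h_j$ as an algebraic power series: there is a polynomial relation $P_j(x,h_j(x))=0$ and, by the Artin--Mazur argument, one can introduce auxiliary Nash functions $w = (w_1,\ldots,w_N)$, $w(0)=0$, such that the graph $\Gamma = \{(x,w):\ w=w(x)\}$ is a component of an algebraic set $W\subset\K^n\times\K^N$ and the projection $\pi:W\to\K^n$ restricts to a Nash diffeomorphism near $0$ carrying $\Gamma\cap \pi^{-1}(X)$ onto $W'\cap\Gamma$ for a suitable algebraic $W'$. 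Composing with $x\mapsto(x,w(x))$ gives a Nash diffeomorphism $\eta$ of a neighborhood of $0$ in $\K^n$ onto a Nash submanifold of $\K^n\times\K^N$ which is the graph of $w$, and on which $X$ becomes (the trace of) an algebraic set.

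To turn this into a \emph{self-map} of $\K^n$ with the tangency estimate, I would proceed as in the proof of Proposition \ref{Prop1}: split $w(x) = w_m(x) + \tilde w_m(x)$, where $w_m$ is the $m$-th Taylor polynomial, and deform. The point is that the components $w_j$ being algebraic power series, the Artin--Mazur construction depends on finitely many of the Taylor coefficients in a polynomial way, so one gets a $\K$-analytic (indeed Nash) family $\eta_t$, $t\in(-R,R)$, of embeddings with $\eta_1 = \eta$ and $\eta_0$ the trivial inclusion $x\mapsto(x,0)$, each $\eta_t$ putting $X$ inside a fixed algebraic set (with $t$ entering polynomially). Since $\tilde w_m(x) = O(\|x\|^{m+1})$ and $\frac{\partial \eta_t}{\partial t}$ is controlled by $\|\tilde w_m(x)\|$ times a bounded factor, integration in $t$ from $0$ to $1$ yields $\|\eta(x) - (x,0)\|\le c^m\|x\|^m$ after shrinking the neighborhood, exactly the mechanism of \eqref{boundedder2}. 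Projecting back to the $x$-coordinates via the Nash diffeomorphism $\pi$ (whose linear part at $0$ is the identity, since the $w_j$ vanish to order $\ge 1$) produces the desired Nash $\K$-analytic diffeomorphism $\phi_m$ of a neighborhood $U$ of $0$, with $\phi_m(X\cap U) = V_m\cap\phi_m(U)$ for an algebraic $V_m$, and $\|\phi_m(a)-a\|\le c^m\|a\|^m$.

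There are two technical points to handle with care. First, one must make sure the Artin--Mazur realization can genuinely be done \emph{with a parameter $t$ entering polynomially} and so that the resulting algebraic ambient set $V_m$ does not itself vary with $m$ in an uncontrolled way; this is where one invokes that algebraic power series satisfy polynomial equations with coefficients that are themselves polynomial in the finitely many relevant Taylor data, so the "desingularizing" Nash diffeomorphism is a Nash function of $(x,t)$ on a fixed domain $|t|<R$. Second, one needs the estimate $\|\partial_t\eta_t\|\le C\|\tilde w_m(x)\|$ uniformly in $t\in(-R,R)$ and $m$, which follows because $\eta_t$ is obtained by substituting $w_m(x)+(1-t)\tilde w_m(x)$ (or $t\tilde w_m(x)$, after reindexing) into a fixed Nash map and differentiating in $t$ only hits the $\tilde w_m$ slot.

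The main obstacle I anticipate is precisely the parametrized Artin--Mazur step: the classical statement produces, for a \emph{fixed} Nash set, an algebraic set and a Nash diffeomorphism, but here one needs this uniformly along the one-parameter deformation $w_m + (1-t)\tilde w_m$ and with explicit control of how the construction depends on $x$ and $t$, so that the displacement estimate can be extracted by integration. Once that uniform, polynomial-in-$t$ version of Artin--Mazur is in place, the rest is the same integration-of-$\partial_t$ argument already used for Proposition \ref{Prop1}, together with the observation that a Nash diffeomorphism tangent to the identity at $0$ transports the tangency order of the deformation without loss.
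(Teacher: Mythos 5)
There is a genuine gap, and it is exactly the point you flag as ``the main obstacle'': the parametrized Artin--Mazur step does not exist in the form you need, and without it the whole integration argument has nothing to integrate towards. In the Artin--Mazur construction the algebraicity of the model comes from the fact that the graph of the \emph{full} algebraic power series $w$ (resp.\ $\tau$ in the paper's notation) coincides locally with the algebraic set $M\subset\K^n\times\K^s$; if you replace $w$ by $w_m+(1-t)\tilde w_m$, the deformed graph no longer lies in $M$ (truncation destroys precisely the algebraic relations), so the endpoints of your family are the graph of $w$ and the graph of a polynomial truncation, neither of which carries $X$ onto (the trace of) a fixed algebraic set. The claim that the construction ``depends on finitely many Taylor coefficients in a polynomial way'' is unsubstantiated and is not analogous to Section~\ref{NashSect}: there the deformation works because Theorem~\ref{nest_ploski} provides algebraic power series solutions $a_i(x^i,z)$ of the \emph{same} discriminant system for all values of the parameters $z$, and Theorem~\ref{theorem} trivializes the resulting Zariski equisingular family --- and even then it only yields an arc-analytic homeomorphism, not the Nash $\K$-analytic diffeomorphism required here. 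Finally, your last step ``project back to the $x$-coordinates via $\pi$'' undoes the graph embedding and returns $X$ itself, which is only Nash; no algebraic $V_m\subset\K^n$ is produced. (There is also the subtler issue that $\pi(M)\cap E$ may be strictly larger than $X\cap E$, i.e.\ $X$ is a priori only a union of local components of an algebraic germ, so even granting properness one does not get the exact equality in (a) by projecting.)

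The paper's proof avoids any deformation: it keeps the Artin--Mazur data $M$, $N=\mathrm{graph}(\tau)$ fixed (with $s$ reduced to $1$) and instead \emph{modifies the projection}, replacing $\pi_n$ by $w_m(x,z)=\pi_n(x,z)+(z/\delta)^m$, where $\delta$ is the radius of $F$. The map $\rho_m=(\pi',w_m)$ is a proper polynomial map on $M$, so $V_m:=\rho_m(M)$ is algebraic (in the complex case), and $\phi_m(x)=\rho_m(x,\tau(x))=(x',x_n+(\tau(x)/\delta)^m)$ is a Nash biholomorphism for large $m$ whose displacement is bounded directly by $|\tau(a)|^m/\delta^m\le c^m\|a\|^m$ since $\tau(0)=0$ --- no splitting into Taylor polynomial plus remainder and no integration in $t$ are needed. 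The extra term $(z/\delta)^m$ also does the second job your sketch does not address: together with the properness conditions (x), (y), (z) it pushes the images of the sheets of $M$ outside $E'\times E''\times F$ away from $\rho_m(\mathrm{graph}(\tau))$, which is what gives the exact equality $\phi_m(X\cap U)=V_m\cap\phi_m(U)$ rather than containment in a bigger algebraic germ. Lastly, your plan says nothing about $\K=\R$, where one cannot conclude that the image of a real algebraic set under a proper polynomial map is algebraic; the paper handles this by complexifying $X$, running the complex construction with real-defined data, and restricting to $\R^n$.
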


\begin{rem} By the last assertion, for every $\varepsilon>0$ the maps $\phi_m$ in Proposition \ref{Prop2} can be 
chosen in such a way that $1-\varepsilon<|jacdet(\phi_m)(x)|<1+\varepsilon$ for every $x\in U,$ $m\in\mathbf{N}.$ 
\end{rem}

\noindent\textit{Proof of Proposition \ref{Prop2}.}  First we discuss the case $\K=\C.$ We may assume that $\mathrm{dim}(X)=k<n$ because otherwise there is nothing to prove. By Artin-Mazur's
construction \cite{AM} (cf. also \cite{bochnakkucharz1984}), there are $s\in\mathbf{N}$, an algebraic set $M\subset\C^n\times\C^s$ 
with $0\in M$ and a complex Nash submanifold $N$ of a polydisc $E\times F\subset\C^n\times\C^s$ 
centered at zero such that:\vspace*{2mm}\\
(i) $\pi|_M:M\rightarrow\C^n$ is a proper map, where 
$\pi:\C^n\times\C^s\rightarrow\C^n$ denotes the natural projection, and
$M\cap(E\times F)\subset N,$\\
(ii)
$\pi|_{N}:N\rightarrow E$ is a biholomorphism and
$\pi(M\cap(E\times F))=X\cap E.$\vspace*{2mm}

We may assume that $s=1$ (i.e. $F$ is a disc in $\C$). Indeed, we may replace $M,N$ by $v_L(M), v_L(N)\subset\C^n\times\C,$
respectively.
Here $v_L$ is defined by the formula $v_L(x,z)=(x,L(z)),$
where $L:\C^s\rightarrow\C$ is a linear form such that $v_L$ restricted 
 to $(\{0\}^n\times\C^s)\cap M$ is injective. (Note that it may be necessary
to replace $E\times F$ by a smaller polydisc.)

We may also assume (applying a change of variables) that 
$\pi(M)\subset\C^n=\C^{n-1}\times\C$ has proper projection
onto $\C^{n-1}$ and $X\cap (E'\times E'')$ has proper projection onto $E',$
where $E=E'\times E''\subset\C^{n-1}\times\C.$

Denote $\pi=(\pi_1,\ldots,\pi_{n-1},\pi_n)=(\pi',\pi_n).$ Then after replacing $\pi_n$ by any
polynomial $w$ we obtain a polynomial map $\rho=(\pi',w)$ such that $\rho|_M$ is proper (so
$\rho(M)$ is an algebraic subset of $\C^n$). Now,
by (ii), there is a holomorphic function $\tau:E'\times E''\rightarrow F$ such that $N$
is the graph of $\tau$. We will choose $w=w_m$ such that (perhaps after shrinking $E', E'', F$) for $\rho_m=(\pi',w_m)$
the map $\phi_m(x)=\rho_m((x,\tau(x)))$ satisfies (a) and (b) with $V_m=\rho_m(M)$. 
The size of $E'\times E''\times F$ will be independent of $m.$  The idea to obtain algebraic $V_m$
(equivalent to or approximating some $X$) as the image of an algebraic set by a certain polynomial map $\rho_m$ appeared before (see e.g. \cite{bochnakkucharz1984}, \cite{AK}, \cite{B2}). But here $\rho_m$ must be chosen in a special way to ensure that $V_m$
is both analytically equivalent and higher order tangent to $X.$

Let us verify that after shrinking $E', E'', F$ the following hold:\vspace*{2mm}\\
(x) $(\overline{E'}\times\C\times\partial F)\cap M=\emptyset,$\\
(y) $(\overline{E'}\times\partial E''\times\overline{F})\cap M=\emptyset,$\\  
(z) $(\overline{E}\times\partial F)\cap\overline{N}=\emptyset.$\vspace*{2mm}

First, by the fact that $N\subset E\times F$ is a graph of a holomorphic function defined
on $E,$ we know that (z) holds after replacing $E$ by any of its relatively compact subsets. Hence, we may assume
to have (z). Observe also that if (z)
is true, then it remains such after shrinking $F$ slightly with fixed $E$.

Next, by the facts that $\pi|_M$ is proper and $\pi(M)$ has proper projection onto $\C^{n-1}$, we know
that $(\{0\}^{n-1}\times\C\times\C)\cap M$ is a finite set. Therefore after shrinking $F$ slightly
(in such a way that (z) remains true) we have $(\{0\}^{n-1}\times\C\times\partial F)\cap M=\emptyset.$
Thus, in view of the properness of the projections, (x) holds for $F$ chosen above with any sufficiently small neighborhood
$E'$ of $0.$ 
It follows that both (x) and (z) hold with any sufficiently small
$E=E'\times E''.$ We can pick $E$ in such a way that $(\overline{E'}\times\partial E'')\cap\pi(M)=\emptyset,$
which implies (y).

Let $\delta$ denote the radius of
$F.$ Define $w_m:E\times F\rightarrow\C$ by the formula 
$w_m(x,z)=\pi_n(x,z)+(\frac{z}{\delta})^m$ with large $m$.
Let us check that $\phi_m:U\rightarrow\C^n$ given by $\phi_m(x)=\rho_m((x,\tau(x)))$ 
has all the required properties, where
$U=E'\times E''.$ For $z\in F$ we have $|z|<\delta$ and 
the graph of $\tau$ is contained in $E'\times E''\times F,$ hence in view of (z), for $m$ large enough,
$\phi_m$ is a biholomorphism onto its image. The injectivity of 
$\phi_m$ requires a brief explanation. 
%By the definition of $\phi_m$ the problem of injectivity 
%easily boils down to the problem of injectivity of
%a one variable holomorphic function of the form $g_m(x_n)=x_n+k_m(x_n),$ defined on a disc such that for
%large $m$ the derivative of $k_m(x_n)$ has very small modul. Such $g_m$ clearly is injective.\\
First, by the definition of $\phi_m,$ we can explicitly write 
\begin{equation}\label{phim}\phi_m(x)=\left(x_1,\cdots,x_{n-1},x_n+\left(\frac{\t(x)}{\d}\right)^m\right).
\end{equation}
Thus $\phi_m$ is injective on $U$ if the map $x_n\mapsto x_n+\left(\frac{\t(x)}{\d}\right)^m$ is injective for every fixed $(x_1,\cdots,x_{n-1})$. The latter assertion is true for large $m$ if $\mathrm{sup}_{x\in U}|\frac{\tau(x)}{\delta}|<1$
(because then the modulus of the derivative of the map $x_n\mapsto \left(\frac{\t(x)}{\d}\right)^m$ is small). 
Let us check that $\mathrm{sup}_{x\in U}|\frac{\tau(x)}{\delta}|<1.$ Recall that $N=\mathrm{graph}(\tau).$
Hence, by (z), we have
$$0<\mathrm{inf}\{||a-b||\ ; \ {a\in\overline{\mathrm{graph(\tau)}}, b\in\overline{E}\times\partial F}\}
\leq\mathrm{inf}\{|\tau(x)-c| \ ; \  x\in\overline{E}, c\in\partial F \}.$$ Since $\delta$ is the radius of $F,$ and $U=E,$ we obtain $\mathrm{sup}_{x\in U}|\tau(x)|<\delta,$ as required.

Let us verify that for large $m,$ 
$$\phi_m(X\cap(E'\times E''))=\rho_m(M)\cap\phi_m(E'\times E'').$$ 
By  (i), (ii) and the definition of $\phi_m,$ we have $\phi_m(X\cap(E'\times E''))=\rho_m(\mathrm{graph}(\tau)\cap M)$. Moreover, $\rho_m(\mathrm{graph}(\tau))=\phi_m(E'\times E''),$ hence it is sufficient to
show that $\rho_m(\mathrm{graph}(\tau)\cap M)=\rho_m(\mathrm{graph}(\tau))\cap\rho_m(M).$
In fact, the inclusion "$\subset$" is trivial so we prove "$\supset$".

Fix $a\in\rho_m(\mathrm{graph}(\tau))\cap\rho_m(M).$
Then there are $z\in\mathrm{graph}(\tau)$ and $v\in M$ such that $\rho_m(z)=\rho_m(v)=a.$
Observe that $z,v\in E'\times\C\times\C.$ Indeed, write $z=(z_1,\ldots,z_{n-1},z_n,z_{n+1}),$
$v=(v_1,\ldots,v_{n-1},v_n,v_{n+1}).$ Since $\mathrm{graph}(\t)\subset E'\times E''\times F$ we have
$(z_1,\ldots,z_{n-1})\in E'.$ By the definition of $\rho_m$ and by $\rho_m(z)=\rho_m(v)$ we have $(z_1,\ldots, z_{n-1})=(v_1,\ldots,v_{n-1})$.

Moreover, $(E'\times\C\times\C)\cap M$ is bounded so
$v$ must belong to $E'\times\C\times{F}$ because otherwise (for large $m$)
in view of the definition of $w_m$ and (x),
$\rho_m(v)=a$ lies outside $\rho_m(\mathrm{graph}(\tau)),$ which is a contradiction.
Now if $v\notin E'\times E''\times{F},$ then by (y), (z), (x) (for large $m$) $\rho_m(v)\notin\rho_m(\mathrm{graph}(\tau)),$
again a contradiction. Consequently, in view of (i), we have $v\in M\cap (E'\times E''\times F)\subset\mathrm{graph}(\tau).$ 
Since $\rho_m|_{\mathrm{graph}(\tau)}$ is injective, we have $v=z,$ hence 
$a\in\rho_m(\mathrm{graph}(\tau)\cap M).$ This shows that 
$$\phi_m(X\cap(E'\times E''))=\rho_m(M)\cap\phi_m(E'\times E'')$$ and completes the proof of (a).

Finally, by (\ref{phim}) we have 
$$||\phi_m(a)-a||\leq\frac{|\tau(a)|^m}{\delta^m}$$ for every $a\in E'\times E''.$ Since 
$\tau(0)=0,$ we immediately obtain (b). 

The last assertion of the proposition follows by (b). Namely, we may assume that $U$ is so small that $c||a||<1$ for every $a\in U.$ Then (b) implies that $(\phi_m)_{n\in\mathbf{N}}$ converges to $id$ uniformly on $U.$ In view of the Weierstrass theorem, after shrinking $U$ slightly, we obtain that $(\frac{\partial\phi_m}{\partial x_i})_{m\in\mathbf{N}}$ converges uniformly on $U$ to $\frac{\partial id}{\partial x_i}$ for every $i=1,\ldots,n.$
This completes the proof in the case $\K=\C$.

As for $\K=\mathbb{R},$ we cannot simply repeat the procedure above because
the image of a real algebraic set by a proper polynomial map need not be algebraic. Instead we proceed as
follows. First for the given Nash set $X$ let $X_{\C}$ denote its complexification. More precisely,
$X_{\C}$ is a representative of the smallest complex Nash germ in $(\C^n,0)$ containing the germ $(X,0).$
Note that $X_{\C}$ is defined by real equations. For $X_{\C}$ one can repeat the construction described above
obtaining $M,N$ also defined by real equations. In particular, the Taylor expansion of $\phi_m$ around zero has real coefficients i.e. the restriction of $\phi_m$ to $\mathbb{R}^n$ is a real Nash isomorphism.
It is not difficult to observe that $(\phi_m(X),0)$ is the
germ of a real algebraic set as required.\qed

Theorem \ref{Main} now follows from Proposition \ref{Prop1} and Proposition \ref{Prop2}. 
%%%%%%%%%%%%%%%%%%%%%%%%%%%%%%%%%%%%%%%%%%%%%%%%%%%%%%%%

%%%%%%%%%%%%%%%%%%%%%%%%%%%%%%%%%%%%%%%%%%%%%%%%%%%%%%%%

\section{Proof of Theorem \ref{MainCor}}\label{section_cor}
Let $(X,0)\subset (\K^n,0)$ be a germ of analytic set. In both real and complex cases $(X,0)$ can be considered as a real analytic set germ defined by equations
$$g_1=\cdots=g_l=0$$ 
where the $g_i$ are convergent power series with real coefficients. A real analytic arc on $(X,0)$ is a germ of real analytic map $(\R,0)\lgw (X,0)$. Then the space of real analytic arcs of $(X,0)$ is in bijection with set of morphisms
$$\frac{\R\{x_1,\ldots,x_n\}}{(g_1,\ldots,g_l)}\lgw \R\{t\}$$
which are defined by the data of $n$ convergent power series $x_1(t),\ldots, x_n(t)\in t\R\{t\}$ such that
$$g_i(x_1(t),\ldots,x_n(t))=0\ \ \ \forall i.$$

Let $m$ be a positive integer and let $\phi_m$ be the homeomorphism given by Theorem \ref{Main}. 
Let $\gamma$ be an arc on $(X,0)$, i.e. $\gamma$ is a real analytic map $(-\e,\e)\lgw X\cap U$ for some $\e>0$.  
Then we define $\gamma':(-1,1)\lgw X\cap U$ by $\gamma '(t)= \gamma (\e t)$ for all $t\in (-1,1)$.
For such a real analytic arc $\gamma'$, $\phi_m\circ {\gamma}'$ is a real analytic arc on $V_m\cap\phi_m(U)$ since $\phi_m$ is an arc-analytic map (cf. Theorem \ref{Main}). Thus $\phi_m\circ {\gamma}$ is a  real analytic arc on $(V_m,0)$. On the other hand $\phi_m^{-1}$ is arc-analytic thus the same procedure applies. This shows that $\phi_m$ induces a bijection between the space of real analytic arcs on $(X,0)$ and the space of real analytic arcs on $(V_m,0)$.

Now if ${\gamma}$ is a real analytic arc on $(X,0)$ then 
$$
\|\phi_m( {\gamma}(t))- {\gamma}(t)\|\leq  C^m\|{\gamma}(t)\|^m\leq {C(m,\gamma)}|t|^m$$
 for all $t$ small enough and some positive constant $C(m,\gamma)$ depending only on $m$ and $\gamma$. This shows that $x_i\circ\phi_m\circ {\gamma}(t)$  has the same Taylor expansion as $x_i\circ{\gamma}$  up to order $m-1$. Thus $\phi_m$ induces the identity map between the space of $m-1$-truncations of real analytic arcs on $(X,0)$ and the space  of $m-1$-truncations of real analytic arcs on $(V_m,0)$. (We need to shift the index $m\to m+1$ to get the map of  the statement of the theorem.)
 
 If $(X,0)$ is complex analytic  then any real analytic arc germ ${\gamma} : (\R,0)\lgw (X,0)$ extends uniquely to 
 a complex analytic arc germ ${\gamma}_\C : (\C ,0)\lgw (X,0)$ (both are given by 
 exactly the same power series), and similarly for the arc-germs in $(V_m,0)$.   
 Of course, any complex arc germ in $(X,0)$, resp. in  $(V_m,0)$,   is such extension of a real arc germ. 
 Thus $\mathcal{A}_m^{\C}(X)=\mathcal{A}_m^{\C}(V_m)$ follows from $\mathcal{A}_m^{\R}(X)=\mathcal{A}_m^{\R}(V_m)$.
 
 \begin{rem}\label{exnonconv} 
By a result of M. Greenberg (cf. \cite{Gr} or \cite{Sc} for the analytic case) for a given analytic germ $(X,0)\subset (\K^n,0)$ there exists a constant $a=a_X>0$ such that for every integer $m$ we have that
$$\mathcal A_m^\K(X)=\pi_{m}\left(\mathcal B_{am}^\K(X)\right)$$ 
where $\mathcal B_k^\K(X)$ denotes the space of $k$-jets on $(X,0)$, i.e. the space of $\K$-analytic arcs on $(\K^n,0)$ whose contact order with $(X,0)$ is at least $k+1$, and $\pi_{m}$ is the truncation map at order $m$, i.e. the map sending an arc on $(\K^n,0)$ onto its truncation at order $m$. If we denote by $(X',0)$ an analytic germ defined by equations that coincide with the equations defining $(X,0)$ up to order $am$, then we obviously have that
$$\mathcal B_{am}^\K(X)=\mathcal B_{am}^\K(X').$$
But while $\mathcal A_m^{\K}(X)$ is  the truncation of $\mathcal B_{am}^\K(X)$, $\mathcal A_m^{\K}(X')$ has no reason in general to be equal to the truncation of $\mathcal B_{am}^\K(X')$ since the constant $a_{X'}$ of Greenberg's Theorem may be strictly greater than $a=a_X$. Thus we cannot prove, using Greenberg's Theorem, that 
$\mathcal A_m^\K(X)$ is equal to $\mathcal A_m^\K(X')$ when $(X',0)$ is an analytic germ whose equations coincide with those of $(X,0)$ up to a high order.\\ 
\\
In fact  a high order of tangency of two analytic germs
does not guarantee that the spaces of truncated arcs associated with these germs are equal. 
We can explicitly show this on the following example. Let 
$$f(x,y,z)=z^2-xy^4,\  f_k(x,y,z)=z^2-x(y^4+x^{2k})$$
 and define $X=\{(x,y,z): f(x,y,z)=0\}, 
X_k=\{(x,y,z):f_k(x,y,z)=0\}$ for any positive integer 
$k$ which is not divisible by $2.$ Then the arc $\gamma$ given by $\gamma(t) = (t,0,0)$ satisfies $\gamma\in \mathcal{A}_1^{\K}(X)$
but in both cases real or complex $\gamma\notin\mathcal{A}_1^{\K}(X_k).$ Indeed, let
$\tilde{\gamma}$ be any arc whose truncation up to order $1$ equals $\gamma.$ Then after substituting $\tilde{\gamma}$ to $x(y^4+x^{2k})$ and to $z^2,$ we obtain power series with odd and even order of zero, respectively, so $f_k\circ\tilde{\gamma}\neq 0$. Thus $\mathcal{A}_1^{\K}(X_k)\neq \mathcal{A}_1^{\K}(X),$
although $2k$-truncations of $f$  and $f_k$ are equal and the multiplicities of $X$ and $X_k$ at $0$ are also equal for $k$ large enough.\end{rem}
%%%%%%%%%%%%%%%%%%%%%%%%%%%%%%%%%%%%%%%%%%%%%%%%%%%%%%%

\end{document}